\documentclass[12pt,reqno]{amsart}

\usepackage{amsmath, amsthm, amssymb}

\usepackage{url} 
\usepackage{graphicx}
\usepackage{xcolor}
\usepackage{moreverb}
\usepackage{fancyvrb}
\usepackage{tikz}
\usetikzlibrary{arrows.meta}

\def\r{\mathbb R}

\def\s{\mathbb S}
\def\h{\mathbb H}

\def\v{\mathcal V}
\def\n{\mathcal N}
\def\t{\mathcal T}

\def\x{\mathfrak X}
\DeclareMathOperator{\sech}{sech}

\usepackage{listings}
 \lstnewenvironment{code}[1][]
  {\lstset{#1}}
  {}

\newtheorem{theorem}{Theorem}[section]
 \newtheorem{proposition}[theorem]{Proposition}
 
 \newtheorem{lemma}[theorem]{Lemma}
\theoremstyle{definition}
\newtheorem{definition}[theorem]{Definition}
\newtheorem{example}[theorem]{Example}
\newtheorem{remark}[theorem]{Remark}

\begin{document}

\title[Prescribed angle surfaces]{Prescribed Angle Surfaces Associated with Torse-Forming Vector Fields in Riemannian Manifolds}

\author{Muhittin Evren Aydin$^1$}
\address{$^1$Department of Mathematics, Faculty of Science, Firat University, Elazığ,  23200 Türkiye
\newline
ORCID: 0000-0001-9337-8165}
\email{meaydin@firat.edu.tr}
\author{ Esra Dilmen$^2$}
 \address{$^2$Firat University, Graduate School of Natural and Applied Sciences, Mathematics,
Elazığ, Türkiye
\newline
ORCID: 0000-0001-6750-9475}

 \email{esradlmn23@gmail.com}
\author{ Büşra Karakaya$^3$}
 \address{$^3$Firat University, Graduate School of Natural and Applied Sciences, Mathematics,
Elazığ, Türkiye
 \newline
ORCID: 0009-0000-6285-3701}
 \email{busrakarakaya404@gmail.com}

\keywords{Torse-forming vector field, hypersurface, Riemannian manifold, totally geodesic, ruled surface}
\subjclass{53B20; 53A05; 53C42}
\begin{abstract}
In this paper, we introduce the notion of a prescribed angle hypersurface in a Riemannian manifold associated with a pair $(\mathcal{V},\theta)$, where $\mathcal{V}$ is a unit vector field along the hypersurface and $\theta$ denotes the angle between $\mathcal{V}$ and the unit normal vector field of the hypersurface. We study such hypersurfaces in case $\mathcal{V}$ is a torse-forming vector field. In the particular $3$-dimensional case, we determine the intrinsic and extrinsic curvatures of these hypersurfaces in terms of the prescribed angle and the potential function of $\mathcal{V}$. Using this, we classify prescribed angle surfaces under suitable assumptions.
\end{abstract}
\maketitle

\section{Introduction} \label{intro}

In this paper, we consider orientable surfaces in Riemannian manifolds making a prescribed angle with a vector field. Although this is a classical topic in differential geometry, we introduce a new perspective, as described below.

Let $\Sigma$ be an orientable surface in a Riemannian manifold $M$ and $\n$ denote its unit normal vector field. Let $\v$ be a nowhere-vanishing unit vector field along $\Sigma$. By a vector field along the surface $\Sigma$, we mean a section of the restricted tangent bundle $TM|_{\Sigma}$. Such vector fields may be tangent, normal or have both nonzero tangential and normal components with respect to $\Sigma$. 

The angle function $\theta$ between $\n$ and $\v$ along $\Sigma$ is defined by $\cos\theta=\langle \n,\v\rangle$. The surface $\Sigma$ is called a constant angle surface with respect to $\v$ if the function $\theta$ is constant along $\Sigma$. In this case, the vector field $\v$ is sometimes called the axis of $\Sigma$.

Such surfaces in the Euclidean space $\r^3$ were studied in \cite{cd}, where $\v$ is considered as a direction field on $\r^3$. When the ambient space is a product manifold such as $\s^2\times \r$ \cite{dfvv}, $\h^2\times \r$ \cite{dm} or $I\times_\rho \r^2$ \cite{dmvv}, constant angle surfaces were classified. In each case, the vector field $\v$ is assumed to be tangent to the $\r$-direction. If $\v$ is the position vector field on $\r^3$, then the class of constant slope surfaces was determined in \cite{mun}. 

When $\v$ is a Killing vector field, constant angle surfaces were also studied. For instance, the corresponding classifications in $\r^3$ were obtained in \cite{mun1}. In the $3$-dimensional Berger sphere, these surfaces were characterized in \cite{mo}. See also \cite{mun2} for the case where the ambient space is $\s^2\times \r$.

In higher-dimensional cases, a different perspective on the constant angle property is to require that the angle between the tangent space of a submanifold and a given vector field be constant. In this setting, and in the Euclidean case, such submanifolds are also called helix submanifolds \cite{dsr}. Helix hypersurfaces were also described in the cited article by assuming $\v$ to be a fixed direction. A recent characterization of helix surfaces was obtained in \cite{loy6} so that isogonal lines become generalized helices and pseudogeodesic lines.

Another perspective was introduced by Chen \cite{crs0}, defined rectifying submanifolds in Euclidean spaces as those for which the position vector field is orthogonal to the first normal space. Afterwards, Chen also generalized this notion to arbitrary Riemannian manifolds \cite{crs3}.

Before proceeding with the literature review, we recall the notion of a torse-forming vector field. Let $\nabla$ denote the Levi-Civita connection on a Riemannian manifold $M$. A vector field $\v\in \x(M)$ is said to be {\it torse-forming} \cite{ya0} if there exist a smooth function $f$ on $M$ and a $1$-form $\omega$ such that
\begin{equation}\label{int-tor}
\nabla_X\v=fX+\omega(X)\v, \quad X\in \mathfrak{X}(M),
\end{equation}
where $f$ and $\omega$ are called the potential function and the generating form of $\v$, respectively (see also \cite{bo1,mih0}).  In particular, the vector field $\v$ is said to be concircular if $\omega$ vanishes identically \cite{ya1}. Otherwise, that is, when $\omega\neq 0$, it is called a torqued vector field provided that $\omega(\v)=0$. Moreover, if $\omega=-f\nu$, where $\nu$ is the $1$-form dual to $\v$, then the vector field $\v$ is said to be anti-torqued \cite{cs,dan,na}.

These vector fields provide a useful tool for studying the geometry and topology of ambient spaces. However, not every Riemannian manifold admits such vector fields globally. For results concerning the global existence of these vector fields on the ambient space, see \cite{amc,crs00,c01,dan,dta}. 

Continuing the literature review on constant angle hypersurfaces, a new approach to their study was initiated in \cite{loy4,loy5}. Let the ambient space be one of the space forms $\r^n$, $\h^n$ or $\s^n$. In this setting, a concircular hypersurface $\Sigma$ is defined as a hypersurface for which $\langle \n,\v\rangle$ is equal to a constant $\theta_0$ on $\Sigma$, where $\v$ is a globally defined concircular vector field on the ambient space. Following this idea, the first author, together with Mihai and Özgür \cite{amc2}, defined and characterized anti-torqued and torqued hypersurfaces in arbitrary Riemannian manifolds.

Notice that if $\v$ is a globally defined concircular vector field, then for a concircular hypersurface the angle function $\theta$ between $\n$ and $\v$ is given by $\cos\theta=\theta_0|\v|^{-1}$. This angle function is constant if and only if $|\v|$ is constant. In this case, the hypersurface is totally umbilical (see Proposition \ref{p-umb} and \cite[Proposition 4]{loy5}). Otherwise, the angle function is non-constant along the hypersurface. This situation also occurs for torqued hypersurfaces. However, since every anti-torqued vector field can be assumed to be unit (see \cite{dan}), an anti-torqued hypersurface is also a constant angle hypersurface with anti-torqued axis.

When the ambient space is an arbitrary Riemannian manifold, constant angle surfaces  were also investigated in \cite{dgd} by assuming that $\v$ is parallel transported along any curve on the surface. In addition, submanifolds in arbitrary Riemannian spaces having constant ratio and principal direction properties with respect to an ambient vector field were also studied in \cite{mtv}. 

In view of the above discussion, we introduce the following notion.

\begin{definition}\label{d-pa}
Let $\Sigma$ be an orientable hypersurface in a Riemannian manifold $M$ and $\v$ be a nonzero vector field defined along $\Sigma$. Let $\theta:\Sigma\to [0,\frac{\pi}{2}]$ be a smooth function. We say that $\Sigma$ is a prescribed angle (PA) hypersurface in $M$ associated with the pair $(\v,\theta)$ if $\v$ is unit along $\Sigma$ and satisfies $\langle \v,\n\rangle=\cos \theta$, where $\n$ denotes the unit normal vector field of $\Sigma$. Such hypersurfaces will be briefly called PA hypersurfaces with $(\v,\theta)$. 
\end{definition}

Since the ambient space in Definition \ref{d-pa} is an arbitrary Riemannian manifold, unlike the usual approach in the literature, we no longer assume the existence of a globally defined vector field. Indeed, such an approach involves two separate problems: the existence of a globally defined vector field and the existence of PA hypersurfaces with respect to such a vector field. Instead, by Definition \ref{d-pa}, these two problems are reduced to a single problem. See Fig.~\ref{fig:comparison}.
\begin{figure}[ht!]
\centering
\resizebox{0.82\textwidth}{!}{%
\begin{tikzpicture}[
    >=Stealth,
    font=\small,
    title/.style={rectangle, rounded corners, draw=blue!60!black, thick, fill=blue!5,
        minimum width=5.6cm, minimum height=0.9cm, align=center, font=\bfseries},
    block/.style={rectangle, rounded corners, draw=blue!60!black, thick,
        minimum width=5.8cm, minimum height=1.5cm, text width=5.1cm, align=center},
    bigblock/.style={rectangle, rounded corners, draw=green!50!black, thick, fill=green!3,
        minimum width=6cm, minimum height=3.0cm, text width=5.2cm, align=center},
    arrow/.style={->, thick}
]
\node[title] (t1) at (0,2.5) {Existing approach};
\node[block] (b1) at (0,1)
{\textbf{1.} Existence of a globally\\ defined ambient vector field};
\node[block] (b2) at (0,-2)
{\textbf{2.} Existence of hypersurfaces\\ making a prescribed angle\\ with respect to this field};
\draw[arrow, shorten >=6pt, shorten <=6pt] (b1) -- (b2);
\node at (4.2,-0.5) {\Huge $\Longrightarrow$};
\node[title, draw=green!50!black, fill=green!5] (t2) at (8.5,2.5) {Our approach};
\node[bigblock] (b3) at (8.5,-0.7)
{Existence of a hypersurface \\[0.2cm] admitting a \textbf{unit} \\[0.2cm]\textbf{vector field}};
\end{tikzpicture}%
}
\caption{Comparison between the existing prescribed-angle approach and our approach.}
\label{fig:comparison}
\end{figure}

In this paper, we restrict our study to those PA hypersurfaces for which $\v$ is a torse-forming vector field. Therefore, whenever we refer to a PA hypersurface, we assume the existence of a unit torse-forming vector field along the hypersurface.


By a torse-forming vector field $\v$ along $\Sigma$, we mean a vector field $\v$ satisfying \eqref{int-tor}, where the covariant differentiation is taken with respect to the ambient Levi--Civita connection and $X\in \mathfrak{X}(\Sigma)$.
In this case, as we will see in Lemma \ref{l-par}, if $\v$ is a unit torse-forming vector field, then it must be anti-torqued along $\Sigma$. Definition \ref{d-pa} allows us to study PA hypersurfaces even when the ambient space does not admit a globally defined anti-torqued vector field (see Example \ref{ex31} and Remark \ref{rem31}). This is because a torse-forming vector field defined on an open subset cannot, in general, be extended to the whole ambient space \cite{amc,dta}. Therefore, assuming the existence of vector fields along hypersurfaces does not impose restrictions on the ambient space, in contrast to assuming the existence of globally defined ambient vector fields.


Moreover, as stated in Definition \ref{d-pa}, we do not impose the angle function $\theta$ to be constant; instead, we allow it to be an arbitrary function. This constitutes a distinguishing feature of our paper compared to the existing literature. Indeed, for example, on a totally geodesic PA hypersurface with $(\v,\theta)$, the condition $\theta \notin \{0,\pi/2\}$ implies that $\theta$ is constant if and only if $\v$ is parallel along the hypersurface (see Remark \ref{rem31}). Apart from this, no further information about the hypersurface can be concluded. However, when $\theta$ is allowed to vary, the intrinsic and extrinsic curvatures of a PA surface in a Riemannian manifold can be expressed in terms of the potential function of $\v$ and the angle function $\theta$ (see Proposition \ref{plevi}). Therefore,  it enables us to classify totally geodesic PA surfaces with constant intrinsic curvature (see Theorem \ref{classtotgeo}). Moreover, we also obtain classification and characterization results for non-totally geodesic PA surfaces when $\theta$ is constant (see Proposition \ref{zeroangle} and Theorems \ref{paral} and \ref{trightangle}).

Notice that our idea of allowing the angle function to vary is also inspired by the study of surfaces in $\r^3$ \cite{mun11}, where the tangential component of $\Phi/|\Phi|$ with respect to the surface is a principal direction and $\Phi$ denotes the position vector field. In this setting, the angle between $\Phi/|\Phi|$ and the unit normal vector varies. Recall that the vector field $\Phi/|\Phi|$ is an example of an anti-torqued vector field on $\r^n\setminus\{0\}$ \cite{dan}. Therefore, the considered surfaces in \cite{mun11} can be regarded as PA surfaces with $(\v,\theta)$ such that the tangential component of $\v$ is a principal direction.

The organization of the paper is as follows. In Section \ref{pre}, we give the preliminaries on hypersurfaces in Riemannian manifolds. In Section \ref{sec3}, we establish basic lemmas and results for PA hypersurfaces. Moreover, we study the particular cases of PA hypersurfaces with $(\v,\theta)$ when the function $\theta$ is constant or the vector field $\v$ is parallel along the hypersurface. In the final section, we obtain classification results for PA surfaces with constant intrinsic curvature. We also provide nontrivial examples corresponding to the obtained results.

\section{Preliminaries} \label{pre}

Let $(M^{n+1},\langle,\rangle)$ be a Riemannian manifold of dimension $n+1$. When the dimension is not necessary to specify, we will denote the manifold simply by $M$.

Let $\pi\subset T_pM$ be  a plane section at a point $p \in M$ and $\{e_1,e_2\}$ be a basis of $\pi$. The sectional curvature of $\pi$ is defined by
$$
K(\pi)=\frac{\langle R(e_1,e_2)e_2,e_1\rangle}{\langle e_1,e_1\rangle \langle e_2,e_2\rangle-\langle e_1,e_2\rangle^2},
$$
where $R$ is the Riemannian curvature tensor of $M$ associated with the Levi–Civita connection. The manifold $M$ is called a space form if $K$ is a real constant for all plane sections and points. 

Let $\Sigma$ be an orientable hypersurface in $M^{n+1}$ and $\n$ its unit normal vector field. The shape operator $A$ of $\Sigma$ is a symmetric $(1,1)$-tensor field  on $T\Sigma$ defined by $A(X)=-\nabla_X \n$, $X\in \x(\Sigma)$, where $\nabla$ is the Levi–Civita connection on $M^{n+1}$. If $M^{n+1}$ and $\Sigma$ are orientable and equipped with chosen orientations, then, since $A$ is symmetric, there exists, at each point $p\in\Sigma$, an orthonormal basis of principal directions $\{e_1,\dots,e_n\}\subset T_p\Sigma$ with corresponding real principal curvatures $\kappa_1,\dots,\kappa_n$ \cite{car}. 

The induced Levi-Civita connection $\nabla^{\Sigma}$ on $\Sigma$ is obtained from the Gauss formula:
$$
\nabla_XY=\nabla^{\Sigma}_XY+\langle A(X),Y\rangle \n, \quad X,Y \in\x(\Sigma).
$$
The hypersurface $\Sigma$ is said to be totally geodesic if $A=0$ and totally umbilical if $A=\delta \mbox{Id}$, for a smooth function $\delta$ on $\Sigma$. For every $X,Y,Z\in \mathfrak{X}(\Sigma)$, the Gauss equation is given by
$$
R^\Sigma(X,Y)Z
=
\big(R(X,Y)Z\big)^\top
+
\langle A(Y),Z\rangle A(X)
-
\langle A(X),Z\rangle A(Y),
$$
where $(\cdot)^\top$ denotes the tangential component along $\Sigma$.

In particular, if $\{e_1,e_2\}$ is an orthonormal frame on a surface $\Sigma\subset M^3$, then the Gauss equation yields
$$
K^\Sigma(e_1,e_2)=K(e_1,e_2)+\langle A(e_1),e_1\rangle \langle A(e_2),e_2\rangle-\langle A(e_1),e_2\rangle^2,
$$
where $K^\Sigma(e_1,e_2)$ denotes the intrinsic curvature of the surface and $K(e_1,e_2)$ is the sectional curvature of the tangent plane $T_p\Sigma \subset T_pM^3$. Consequently, the Gauss equation reduces to $
K_{\mathrm{int}}=K_{\mathrm{sec}}+K_{\mathrm{ext}}, $ where $K_{\mathrm{ext}} = \det A$ is the extrinsic curvature of $\Sigma$. We call $\Sigma$  is extrinsically (resp. intrinsically) flat if $K_{\mathrm{ext}}$ (resp. $K_{\mathrm{int}}$) vanishes identically.

A surface $\Sigma \subset M^3$ is said to be ruled if it admits a foliation by curves which are geodesics of the ambient space \cite{lrs}.

\section{PA hypersurfaces with torse-forming vector fields} \label{sec3}

In this section, we study PA hypersurfaces Riemannian manifolds associated with torse-forming vector fields. We first characterize totally-umbilical hypersurfaces using concircular vector fields.

\begin{proposition}\label{p-umb}
There exists a unit concircular vector field $\v$ along an orientable hypersurface $\Sigma$ with nonzero potential function if and only if $\Sigma$ is totally umbilical with normal vector field $\v$.
\end{proposition}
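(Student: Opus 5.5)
The argument splits into the two implications. For the forward direction, suppose $\v$ is a unit concircular vector field along $\Sigma$, so $\nabla_X\v=fX$ for all $X\in\x(\Sigma)$, with $f$ nowhere zero. Differentiating $\langle\v,\v\rangle=1$ along $X\in\x(\Sigma)$ gives
\[
0=2\langle\nabla_X\v,\v\rangle=2f\langle X,\v\rangle .
\]
Since $f\neq 0$, this forces $\langle X,\v\rangle=0$ for every tangent $X$. Hence $\v$ is normal to $\Sigma$, and being unit, $\v=\pm\n$; after choosing the orientation we may take $\n=\v$. Then the shape operator is
\[
A(X)=-\nabla_X\n=-fX ,
\]
so $A=\delta\,\mathrm{Id}$ with $\delta=-f$. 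Thus $\Sigma$ is totally umbilical with normal $\v$.

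For the converse, assume $\Sigma$ is totally umbilical, $A=\delta\,\mathrm{Id}$, and set $\v=\n$. Then $\v$ is unit and
\[
\nabla_X\v=-A(X)=-\delta X ,
\]
which is \eqref{int-tor} with $\omega=0$ and potential function $f=-\delta$. So $\v$ is a unit concircular vector field along $\Sigma$.

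The only delicate point is the nonzero-potential hypothesis. In the forward direction it is used pointwise to deduce $\langle X,\v\rangle=0$, so I will read ``nonzero'' as nowhere vanishing; if $f$ were only not identically zero, one would argue on the open set where $f\neq0$ and extend by continuity, noting that the normality of a unit vector field is a closed condition. In the converse, $f=-\delta$ is nonzero exactly when $\Sigma$ is not totally geodesic. The converse should therefore be stated for umbilical hypersurfaces with nonvanishing umbilicity function $\delta$; the totally geodesic case gives a parallel normal, which has zero potential. I expect the main care to lie in this phrasing rather than in any computation, since everything else is a one-line consequence of differentiating the unit length condition and of the Weingarten formula $A(X)=-\nabla_X\n$.
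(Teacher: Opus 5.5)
Your proof is correct and follows the paper's argument: differentiating $\langle\v,\v\rangle=1$ with $f\neq0$ forces $\v=\pm\n$ and hence $A=\mp f\,\mathrm{Id}$, and conversely $\v=\n$ is concircular with $f=-\lambda$, where the paper also tacitly assumes $\lambda\neq0$. Your parenthetical about $f$ being merely not identically zero is wrong and should be dropped: continuity gives normality only on the closure of $\{f\neq0\}$, while on the interior of $\{f=0\}$ the field is only parallel along $\Sigma$ and need not be normal (for example, take $\r^3$ with metric $e^{2\psi(x)z}\langle,\rangle_e$, where $\psi(x)=e^{-1/x^2}$ for $x<0$ and $\psi=0$ for $x\ge0$; let $\Sigma$ be the plane $z=0$ for $x\le1$ bending to $z=e^{-1/(x-1)^2}$ for $x>1$; then $\v=\partial_z$ is a unit concircular field along $\Sigma$ with $f=\psi$, but it is not normal and $\Sigma$ is not umbilical for $x>1$), so nowhere-vanishing $f$ is genuinely needed.
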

\begin{proof}
Denote by $\nabla$ the Levi-Civita connection of the ambient space. Assume that $\v$ is a unit concircular vector field along $\Sigma$, that is, $\nabla_X \v = f X$ for every $X\in \x(\Sigma)$. Since $\v$ has unit length, we have
$$
0=X\langle \v, \v \rangle = 2\langle \nabla_X \v, \v\rangle = 2f \langle \v, X \rangle,
$$
for every $X\in \x(\Sigma)$. Since $f \neq 0$, it follows that $\langle \v, X \rangle = 0$ and hence $\v$ is normal to $\Sigma$. Therefore, $\v = \pm \n$, where $\n$ is the unit normal vector field. Consequently, $\nabla_X \n =\pm f X,$ which implies $A = \mp f \operatorname{Id}$, namely, $\Sigma$ is totally umbilical.

Conversely, let $\Sigma$ be a totally umbilical hypersurface and $\n$ its unit normal vector field. Since $\Sigma$ is totally umbilical, we have $\nabla_X\n=-\lambda X$,  $X\in \x(\Sigma)$, for some nonzero smooth function $\lambda$. This yields that $\n$ is a unit concircular vector field along $\Sigma$ with potential function $f=-\lambda$. 
\end{proof}

\begin{remark}
Proposition \ref{p-umb} generalizes \cite[Proposition 4]{loy5}, where the result was established for real space forms, to arbitrary Riemannian manifolds.
\end{remark}

Let $\Sigma$ be a  hypersurface in a Riemannian manifold $M$ and $\v$ be a unit torse-forming vector field along $\Sigma$. Denote by $\nabla$ the Levi-Civita connnection on $M$. Then, for any $X\in \x(\Sigma)$ we write
$$
0=X\langle \v, \v \rangle = 2\langle \nabla_X \v, \v\rangle = 2 \langle fX+\omega(X)\v, \v \rangle,
$$
or equivalently 
\begin{equation}\label{omega}
\omega(X)=-f\langle X, \v \rangle, \quad X\in \x(\Sigma).
\end{equation}
This yields the following result:

\begin{lemma}\label{l-par}
Let $\v$ be a unit torse-forming vector field along a hypersurface $\Sigma$. Then, the vector field $\v$ is anti-torqued along $\Sigma$. Moreover, the potential function of $\v$ vanishes along $\Sigma$ if and only if $\v$ is parallel along $\Sigma$.
\end{lemma}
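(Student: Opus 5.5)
The plan is to start from the identity \eqref{omega}, which was obtained by differentiating $\langle \v,\v\rangle=1$ along $\Sigma$. For every $X\in\x(\Sigma)$ it gives
$$\omega(X)=-f\langle X,\v\rangle=-f\,\nu(X),$$
where $\nu$ is the $1$-form dual to $\v$. Since $X$ ranges only over vectors tangent to $\Sigma$, this says that $\omega=-f\nu$ on $T\Sigma$. That is precisely the defining condition of an anti-torqued vector field, read along $\Sigma$. Here $\omega$ and $\nu$ are understood restricted to $\x(\Sigma)$, because the torse-forming condition itself is imposed only for such $X$. This proves the first assertion.

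For the second assertion, I will substitute \eqref{omega} back into \eqref{int-tor} to get
$$\nabla_X\v=f\bigl(X-\langle X,\v\rangle\v\bigr),\qquad X\in\x(\Sigma).$$
If $f\equiv 0$ along $\Sigma$, the right-hand side vanishes, so $\nabla_X\v=0$ for all tangent $X$; that is, $\v$ is parallel along $\Sigma$.

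The converse is the step that needs care. Suppose $\nabla_X\v=0$ for all $X\in\x(\Sigma)$. Then $f\bigl(X-\langle X,\v\rangle\v\bigr)=0$ for every tangent $X$, and to conclude $f=0$ at a point $p$ I must find some $X\in T_p\Sigma$ with $X\neq\langle X,\v\rangle\v$. Because $\v$ is unit, $X-\langle X,\v\rangle\v$ is the orthogonal projection of $X$ onto $\v^\perp$. It vanishes exactly when $X$ is parallel to $\v$, which can only happen for all $X$ if $T_p\Sigma\subset\operatorname{span}\{\v_p\}$. Since $\dim T_p\Sigma=n\ge 2$ (the case of interest, surfaces and beyond), $T_p\Sigma$ cannot lie in a line, so some tangent $X$ has a nonzero projection, and therefore $f(p)=0$. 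As $p$ is arbitrary, $f$ vanishes along $\Sigma$.

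The only expected obstacle is this dimension point. In the degenerate case $n=1$, with $\v$ tangent to the curve, $f$ would not be forced to vanish, so I will state explicitly that $n\ge 2$ is used there.
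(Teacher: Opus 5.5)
Your proof is correct and follows the paper's route: the paper obtains the lemma directly from \eqref{omega}, i.e.\ $\omega=-f\nu$ on $T\Sigma$, and leaves the equivalence between $f=0$ and $\nabla_X\v=0$ implicit. Your explicit argument for the converse via $\nabla_X\v=f\bigl(X-\langle X,\v\rangle\v\bigr)$, choosing a tangent $X$ not parallel to $\v$ (possible since $\dim\Sigma\ge 2$), fills that step. Your remark that the converse fails for $n=1$ when $\v$ is tangent to the curve is also accurate.
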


By Lemma \ref{l-par}, in our setting, natural candidates for PA hypersurfaces are those in ambient spaces admitting globally defined anti-torqued vector fields. Indeed, if $M$ is such a Riemannian manifold, then every orientable hypersurface in $M$ is a PA hypersurface.

More generally, once the ambient space admits a totally umbilical hypersurface, one obtains an immediate example of a PA hypersurface, as shown below.

\begin{example}\label{ex31}
Let $M$ be a Riemannian manifold admitting a totally umbilical hypersurface $\Sigma$ and $\n$ its unit vector field. Then, $\Sigma$ is an example of a PA hypersurface with $(\v,\theta)$, where $\v=\pm\n$ and $\theta=0$. Indeed, since $\Sigma$ is totally umbilical, we have $\nabla_X \n = \lambda X$ for some smooth function $\lambda$ on $\Sigma$ and for all $X\in \mathfrak{X}(\Sigma)$. Hence,
$$\nabla_X\v=\lambda X=\lambda(X-\langle X,\v\rangle\v), \quad X\in \x(\Sigma),$$
which means that the normal vector field $\v=\pm\n$ acts as a unit anti-torqued vector field along $\Sigma$.
\end{example}

\begin{remark}\label{rem31}
Example \ref{ex31} shows that a PA hypersurface can be considered in a Riemannian manifold even if it does not admit a globally defined anti-torqued vector field; for instance, when the ambient space is a sphere (see \cite[Example 1]{dan}).
\end{remark}

Assume that $\Sigma$ is a PA hypersurface with $(\v,\theta)$ in a Riemannian manifold $M$. Denote by $\n$ the unit normal vector field to $\Sigma$. By Definition \ref{d-pa}, the vector field $\v$ admits a  decomposition into its tangential and normal components along $\Sigma$, given by
\begin{equation}\label{decomp}
\v=\t +\cos \theta \n, \quad  \t=\sin\theta e_1,
\end{equation}
where $e_1$ is a unit tangent vector field to $\Sigma$ and $\theta:\Sigma\to [0,\frac{\pi}{2}]$ is a smooth function. 

Let $\nabla$ and $\nabla^\Sigma$ denote the Levi-Civita connections of $M$ and $\Sigma$, respectively. By the Gauss formula, it follows that
\begin{equation}\label{der1}
\nabla_X\v=\nabla^\Sigma_X\t-\cos \theta A(X)+ \big( \langle A(\t),X\rangle+ X(\cos \theta) \big)\n,
\end{equation}
for every $X\in \x(\Sigma)$. On the other hand, by Lemma \ref{l-par}, $\v$ is anti-torqued along $\Sigma$, i.e., $\nabla_X\v=f\big(X-\langle X,\v\rangle\v\big).$ Using the decomposition \eqref{decomp} gives
$$
\nabla_X\v=f\big(X-\langle X,\t\rangle(\t+\cos \theta\n)  \big), \quad X\in\x(\Sigma).
$$
Comparing with \eqref{der1}, we derive
\begin{equation}\label{der-comp}
\left\{
\begin{aligned}
f\big(X-\langle X,\t\rangle \t\big)
&=\nabla^\Sigma_X\t-\cos \theta A(X),\\[4pt]
\langle A(\t),X\rangle+ X(\cos \theta)
&=-f \cos \theta \langle X,\t\rangle.
\end{aligned}
\right.
\end{equation}

Therefore, by the second equation of \eqref{der-comp}, we have the  following result

\begin{proposition}\label{con-prin}
Let $\Sigma$ be a PA hypersurface with $(\v,\theta)$, where $\v$ is an anti-torqued vector field with potential function $f$ and $\theta\neq 0$. Let also $\t$ be the tangential component of $\v$ with respect to $\Sigma$. Then,
\begin{enumerate}
\item[(i)] $\t$ is a principal direction if and only if the function $\theta$ is constant along the orthogonal distribution to $\t$.
\item[(ii)] $\t$ is a principal direction with principal curvature $\kappa_1=-f\cos\theta$ if and only if $\theta$ is constant.
\end{enumerate}
\end{proposition}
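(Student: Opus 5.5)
The whole argument will be read off from the second equation of \eqref{der-comp}. Since $A$ is symmetric, that equation says that for every $X\in\x(\Sigma)$
\begin{equation*}
\langle A(\t)+f\cos\theta\,\t,\,X\rangle=-X(\cos\theta)=\sin\theta\, X(\theta).
\end{equation*}
Thus the tangent vector field $A(\t)+f\cos\theta\,\t$ equals $\sin\theta\,\nabla^\Sigma\theta$, where $\nabla^\Sigma\theta$ is the gradient of $\theta$ on $\Sigma$. Since $\theta\neq0$ and $\t=\sin\theta\,e_1$, we work on the open set where $\sin\theta\neq0$. There $\t$ is a nonzero multiple of $e_1$, so saying $\t$ is a principal direction is the same as saying $e_1$ is one. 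Dividing by $\sin\theta$ gives
\begin{equation*}
A(e_1)=-f\cos\theta\, e_1+\nabla^\Sigma\theta .
\end{equation*}

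For (i), I would test the last identity against a vector $Y$ orthogonal to $\t$, i.e.\ to $e_1$. This gives $\langle A(e_1),Y\rangle=Y(\theta)$. Now $e_1$ is a principal direction exactly when $A(e_1)$ has no component orthogonal to $e_1$, i.e.\ when $\langle A(e_1),Y\rangle=0$ for all $Y\perp e_1$. By the identity this is equivalent to $Y(\theta)=0$ for all $Y\perp\t$, which means $\theta$ is constant along the orthogonal distribution to $\t$.

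For (ii), I pair the identity with $e_1$ itself and obtain $\langle A(e_1),e_1\rangle=-f\cos\theta+e_1(\theta)$. Suppose $e_1$ is principal with $\kappa_1=-f\cos\theta$. Then $e_1(\theta)=0$, and by (i) the derivatives of $\theta$ orthogonal to $e_1$ also vanish. Hence $\nabla^\Sigma\theta=0$, and $\theta$ is constant, assuming $\Sigma$ is connected. Conversely, if $\theta$ is constant then $\nabla^\Sigma\theta=0$, so $A(e_1)=-f\cos\theta\,e_1$. This says precisely that $\t$ is principal with $\kappa_1=-f\cos\theta$.

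The computation is immediate; the only real difficulty is the hypothesis $\theta\neq0$. If $\theta$ vanishes at isolated points, then $\t=0$ there and the statement is meant on the open set where $\sin\theta\neq 0$. I would make this explicit, and then extend the constancy in (ii) by continuity and connectedness.
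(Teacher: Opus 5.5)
Your proposal is correct and follows the paper's proof: both read everything off the second equation of \eqref{der-comp}, testing it against $Y\perp\t$ for (i) and against $\t$ itself for (ii). Your rewriting as $A(e_1)=-f\cos\theta\,e_1+\nabla^\Sigma\theta$ makes the direction ``$\theta$ constant $\Rightarrow$ $\t$ principal with $\kappa_1=-f\cos\theta$'' explicit, whereas the paper obtains it only implicitly by combining (i) with \eqref{con-prin0}.
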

\begin{proof}
Since $\theta\neq 0$, we have $\t\neq 0$. Let $Y\in\x(\Sigma)$ be orthogonal to $\t$. Then
the second equation of \eqref{der-comp} gives $\langle A(\t),Y\rangle-\sin\theta Y(\theta)=0.$ Therefore $\t$ is a principal direction if and only if $\theta$ is constant along the orthogonal disribution to $\t$, completing the proof of the first item. For the second item, assume that $\t$ is a principal direction of $\Sigma $, say $A(\t)=\kappa_1 \t$. Then, the second equation of \eqref{der-comp} yields
\begin{equation}\label{con-prin0}
X(\cos\theta)=-(\kappa_1+f\cos\theta)\langle X,\t\rangle, \quad X\in \x(\Sigma).
\end{equation}
It follows that $\theta$ is constant on $\Sigma$ only if $\kappa_1=-f\cos\theta$. 
\end{proof}

This result extends \cite[Proposition 2.1]{mun11}, obtained in the Euclidean space, to arbitrary Riemannian manifolds, where the position vector field of the Euclidean space is replaced by an arbitrary anti-torqued vector field. Moreover, in the following remark, we indicate the differences between working with torse-forming vector fields and parallel vector fields, as in \cite{dgd}.

\begin{remark}\label{rem31}
By Proposition \ref{con-prin}, the condition that $\t$ is a principal direction is not sufficient by itself to ensure that $\theta$ is constant. In addition, the corresponding principal curvature must satisfy $\kappa_1=-f\cos\theta$. This indicates the difference between working with torse-forming vector fields and parallel vector fields (see \cite[Proposition 5]{dgd}).  Another difference can be seen as follows. To clarify this, consider the simplest case where $\Sigma$ is totally geodesic. Then $A=0$ and the second equation in \eqref{der-comp} reduces to
$$
\sin\theta\, X(\theta)= f \cos\theta \sin\theta \langle X,e_1\rangle.
$$
This implies that $\theta$ must be constant as long as $f$ vanishes. Otherwise, i.e., if $f\neq 0$, the function $\theta$ is not necessarily constant.
\end{remark}

In the last results of this section, we study the particular cases where $\v$ is parallel and where either the tangential or the normal component of $\v$ vanishes identically. The first result extends \cite[Proposition 5]{dgd} to arbitrary dimensions. For the second, see also \cite[Proposition 4.1, Corollary 4.2]{amc1}.

\begin{theorem}\label{paral}
Let $\Sigma$ be a non-totally geodesic PA hypersurface with $(\v,\theta)$, where $\v$ is a parallel vector field. 
If $\theta$ is constant, then the integral curves of the tangential component of $\v$ are ambient geodesics. Moreover,  the shape operator of $\Sigma$ satisfies $\operatorname{det}(A)=0$.
\end{theorem}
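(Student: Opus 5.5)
The plan is to specialize the system \eqref{der-comp} to $f=0$ and $\theta$ constant. By Lemma \ref{l-par}, a parallel unit $\v$ is torse-forming with $f=0$ and $\omega=0$. The second equation of \eqref{der-comp} then reads $\langle A(\t),X\rangle=-X(\cos\theta)=0$ for all $X\in\x(\Sigma)$, so $A(\t)=0$. The first equation becomes $\nabla^\Sigma_X\t=\cos\theta\, A(X)$.

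First we dispose of the degenerate values of the constant $\theta$. If $\theta=0$, then $\v=\pm\n$ is parallel, hence $A=0$, which contradicts the non-totally geodesic assumption. So $\theta\in(0,\pi/2]$, and therefore $\t=\sin\theta\, e_1\neq 0$ with $\sin\theta$ a nonzero constant.

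\textbf{Geodesic integral curves.} From $A(\t)=0$ and $A(X)=\frac{1}{\cos\theta}\nabla^\Sigma_X\t$, or directly from the first equation, we get $\nabla^\Sigma_{\t}\t=\cos\theta\, A(\t)=0$. By the Gauss formula, $\nabla_{\t}\t=\nabla^\Sigma_{\t}\t+\langle A(\t),\t\rangle\n=0$. Since $\t$ has constant length $\sin\theta\neq0$, its integral curves are constant-speed curves with zero ambient acceleration, i.e.\ ambient geodesics. Equivalently, $e_1$ is a unit field with $\nabla_{e_1}e_1=0$. This step also covers $\theta=\pi/2$, where the first equation gives $\nabla^\Sigma\t=0$.

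\textbf{Degeneracy of $A$.} Since $A(\t)=0$ with $\t\neq0$ at every point, $\t$ lies in the kernel of the symmetric operator $A$. Hence $0$ is a principal curvature (with principal direction $e_1$), and $\det(A)=0$.

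The only real subtlety is ensuring $\t$ does not vanish, which is exactly where the non-totally geodesic hypothesis enters by excluding $\theta=0$. Beyond that the argument is a direct reading of \eqref{der-comp}. As a consistency check, when $\theta=\pi/2$ the hypothesis forces nothing further but does not contradict non-total geodesicity.
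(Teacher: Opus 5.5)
Your proposal is correct and follows the paper's proof: you set $f=0$ and take $\theta$ constant in \eqref{der-comp} to get $A(\t)=0$ and $\nabla^\Sigma_{\t}\t=0$, use the Gauss formula to obtain $\nabla_{\t}\t=0$, and conclude $\det(A)=0$ because $A$ has nontrivial kernel (the paper phrases this last step via a principal frame with $\kappa_1=0$). Your explicit use of the non-totally geodesic hypothesis to exclude $\theta=0$, which guarantees $\t\neq 0$, is a useful clarification that the paper leaves implicit.
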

\begin{proof}
Since $\v$ is a parallel vector field along $\Sigma$, by \eqref{der-comp} it follows
$$
\nabla^\Sigma_X\t=\cos \theta A(X), \quad \langle A(\t),X\rangle=-X(\cos \theta), \quad X\in \x(\Sigma).
$$
If $\theta$ is constant, then we get $A(\t)=0$ and $\nabla^\Sigma_{\t}\t=0$. Hence by the Gauss fomrula, we derive $\nabla_{\t}\t=0$, completing the result of the first part. To complete the proof, let $\{e_1,e_2,\dots,e_n\}$ be a local orthonormal frame of principal directions on $\Sigma$. In terms of this frame, the matrix of the shape operator takes the form $A=0\oplus\operatorname{diag}[\kappa_2,\ldots,\kappa_n]$, where $\oplus$ denotes the direct sum of matrices. Hence its determinant is zero.
\end{proof}

\begin{proposition}\label{zeroangle}
Let $\Sigma$ be a PA hypersurface with $(\v,\theta)$, where $\v$ is an anti-torqued vector field.  
\begin{enumerate}
\item[(i)] If $\theta=\frac{\pi}{2}$, then the integral curves of the tangential component of $\v$ are ambient geodesics. Moreover,  the shape operator of $\Sigma$  satisfies $\operatorname{det}(A)=0$.
\item[(ii)] If $\theta=0$, then $\Sigma$ is totally umbilical. 
\end{enumerate}
\end{proposition}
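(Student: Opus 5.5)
The plan is to specialize the two equations in \eqref{der-comp} to the constant angles $\theta=\frac{\pi}{2}$ and $\theta=0$. In each case the decomposition \eqref{decomp} simplifies. One of its components disappears, and the surviving equations translate directly into information about $A$.

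For (i), put $\theta=\frac{\pi}{2}$. Then $\cos\theta=0$, so $\v=\t=e_1$ is a unit tangent field, and $X(\cos\theta)=0$. The second equation of \eqref{der-comp} becomes $\langle A(\t),X\rangle=0$ for every $X\in\x(\Sigma)$. Hence $A(\t)=0$, so $\t$ is a principal direction with principal curvature $0$. The first equation becomes $\nabla^\Sigma_X\t=f\big(X-\langle X,\t\rangle\t\big)$. Taking $X=\t$ and using $|\t|=1$ gives $\nabla^\Sigma_{\t}\t=0$. The Gauss formula then yields
$$
\nabla_{\t}\t=\nabla^\Sigma_{\t}\t+\langle A(\t),\t\rangle\n=0,
$$
so the integral curves of $\t$ are ambient geodesics. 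For the determinant, I will argue exactly as in the proof of Theorem \ref{paral}. Complete $e_1=\t$ to a local orthonormal frame of principal directions $\{e_1,\dots,e_n\}$. This is possible because $e_1$ is already an eigenvector of the symmetric operator $A$, so $A$ preserves $e_1^\perp$ and we can diagonalize it there. In this frame $A=0\oplus\operatorname{diag}[\kappa_2,\dots,\kappa_n]$, so $\det(A)=0$.

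For (ii), put $\theta=0$. Then $\t=\sin\theta\, e_1=0$ and $\v=\n$. The second equation of \eqref{der-comp} is trivially satisfied, since both sides vanish. The first equation becomes $fX=-A(X)$, so $A=-f\,\mathrm{Id}$ and $\Sigma$ is totally umbilical. This agrees with Proposition \ref{p-umb}. Since $\omega$ vanishes on tangent vectors by \eqref{omega}, $\v$ is in fact concircular along $\Sigma$. If $f=0$, the conclusion is that $\Sigma$ is totally geodesic, which is a degenerate case of totally umbilical.

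No step is a real obstacle here. The only point needing care is the existence of the adapted principal frame in (i). This requires noting that $A$ restricts to the orthogonal complement of $\t$ because $A$ is symmetric and $A(\t)=0$. Smoothness of the frame is not needed anyway, since $\det(A)=0$ is a pointwise statement.
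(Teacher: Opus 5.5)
Your proposal is correct and follows the paper's proof essentially step for step. For (i), specializing the two equations of \eqref{der-comp} to $\theta=\frac{\pi}{2}$ gives $A(\t)=0$ and $\nabla^\Sigma_{\t}\t=0$, and the Gauss formula together with the principal-frame argument of Theorem~\ref{paral} finishes; for (ii), setting $\theta=0$ reduces the first equation to $A=-f\,\mathrm{Id}$, and your added remarks (why the adapted principal frame exists, and the degenerate case $f=0$) are correct details the paper leaves implicit.
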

\begin{proof} 
\begin{enumerate}
\item[(i)] Writing $\theta=\frac{\pi}{2}$ in \eqref{decomp} yields $\t=e_1$. Considering in \eqref{der-comp}, we find 
$$
\nabla^\Sigma_X\t=f\big(X-\langle X,\t\rangle \t\big) , \quad \langle A(\t),X\rangle=0, \quad X\in \x(\Sigma).
$$
This gives $A(\t)=0$ and $\nabla^\Sigma_{\t}\t=0$. The proof is completed by following that of Theorem \ref{paral}.
\item[(ii)]  If $\theta=0$, then by \eqref{decomp} it follows that $\t=0$. The conclusion follows from the first equality in \eqref{der-comp}.
\end{enumerate}
\end{proof}

Considering item (ii) of Proposition \ref{zeroangle} together with Example \ref{ex31} and comparing them with Proposition \ref{p-umb}, an important difference emerges. The existence of a unit concircular vector field $\v$ along a hypersurface forces the hypersurface to be totally umbilical and $\v$ becomes normal to the hypersurface. However, the existence of a unit anti-torqued vector field on a hypersurface does not, in general, imply that the hypersurface is totally umbilical. For instance, as will observe in the items of Example \ref{ex41}, the surfaces admitting a unit anti-torqued vector field are not totally umbilical.

\section{Constant curvature PA surfaces} \label{sec4}

In this section, we study the curvature properties of PA surfaces in $3$-dimensional Riemannian manifolds. 

Under the assumption that the tangential component of $\v$ is a principal direction (or, in the sense of \cite{mtv}, that the PA surface has the principal direction property with respect to $\v$), we first establish an auxiliary result for computing the Levi-Civita connection and the intrinsic curvature of such surfaces.

\begin{proposition}\label{plevi}
Let $\Sigma$ be a PA surface with $(\v,\theta)$, where $\v$ is an anti-torqued vector field with potential function $f$ and $\theta\neq0$. Let also $\{e_1,e_2\}$ be a local orthonormal frame on $\Sigma$ of principal directions and $\kappa_i$ the corresponding principal curvatures. 
If the tangential component of $\v$ satisfies $\t=\sin \theta e_1$, then 
\begin{enumerate}
\item[(i)]
\begin{equation}\label{e12}
e_1(\theta)=\kappa_1+f\cos\theta, \quad e_2(\theta)=0,
\end{equation}
\item[(ii)] 
\begin{equation}\label{levi}
\nabla^\Sigma_{e_1}e_1=\nabla^\Sigma_{e_1}e_2=0, \quad \nabla^\Sigma_{e_2}e_1=Be_2, \quad \nabla^\Sigma_{e_2}e_2=-Be_1, \quad [e_1,e_2]=-Be_2,
\end{equation}
\item[(iii)] there is a coordinate system $(u,v)$ on $\Sigma$ such that its metric is given by
\begin{equation}\label{intcur0}
g^\Sigma=du^2+\big(pe^{\int Bdu)}\big)^2dv^2, \quad e_1=\partial_u, \quad e_2=\big(pe^{\int Bdu)}\big)^{-1}\partial_v,
\end{equation}
\item[(iv)] the curvatures of $\Sigma$ with respect to $(u,v)$ are
\begin{equation}\label{intcur}
K_{\mathrm{int}}=-\left(\frac{\partial B}{\partial u}+B^2\right), \quad K_{\mathrm{ext}}=\left(\frac{d\theta}{du}-f\cos\theta\right)\kappa_2,
\end{equation}
where $B:=\frac{f+\kappa_2\cos\theta}{\sin \theta}$ and $p=p(v)$ is some positive function. 
\end{enumerate}
\end{proposition}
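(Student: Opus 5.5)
The plan is to extract everything from the two equations in \eqref{der-comp}, evaluated on the frame $\{e_1,e_2\}$ with $\t=\sin\theta\, e_1$, $A(e_1)=\kappa_1e_1$ and $A(e_2)=\kappa_2e_2$. Since $\theta\in(0,\frac{\pi}{2}]$, we have $\sin\theta\neq 0$, and we may divide by it throughout.

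For (i), I will use the second equation of \eqref{der-comp}. Taking $X=e_1$ gives $\sin\theta\,\kappa_1-\sin\theta\, e_1(\theta)=-f\cos\theta\sin\theta$, hence $e_1(\theta)=\kappa_1+f\cos\theta$. Taking $X=e_2$ gives $-\sin\theta\, e_2(\theta)=0$, hence $e_2(\theta)=0$. This is essentially the computation in Proposition \ref{con-prin}(i).

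For (ii), I will use the first equation of \eqref{der-comp} and expand $\nabla^\Sigma_X\t=X(\sin\theta)e_1+\sin\theta\,\nabla^\Sigma_Xe_1$. Recall that $\nabla^\Sigma_Xe_1\perp e_1$ because $e_1$ is unit.
\begin{itemize}
\item With $X=e_1$, the left side is $f\cos^2\theta\, e_1$. The $e_1$-component on the right is $\cos\theta\,(\kappa_1+f\cos\theta)-\cos\theta\,\kappa_1$, which holds identically by (i). The $e_2$-component therefore forces $\nabla^\Sigma_{e_1}e_1=0$.
\item Differentiating $\langle e_1,e_2\rangle=0$ and $|e_2|=1$ then gives $\nabla^\Sigma_{e_1}e_2=0$.
\item With $X=e_2$, using $e_2(\theta)=0$, we get $fe_2=\sin\theta\,\nabla^\Sigma_{e_2}e_1-\cos\theta\,\kappa_2e_2$. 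Hence $\nabla^\Sigma_{e_2}e_1=Be_2$, and by orthonormality $\nabla^\Sigma_{e_2}e_2=-Be_1$.
\end{itemize}
Torsion-freeness then yields $[e_1,e_2]=\nabla^\Sigma_{e_1}e_2-\nabla^\Sigma_{e_2}e_1=-Be_2$.

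For (iii), let $\omega^1,\omega^2$ be the dual coframe. I will show the coordinates exist in three steps.
\begin{enumerate}
\item Compute $d\omega^1(e_1,e_2)=-\omega^1([e_1,e_2])=0$. So $\omega^1$ is closed and locally $\omega^1=du$, with $e_1(u)=1$ and $e_2(u)=0$.
\item The integral curves of $e_1$ are geodesics, namely the $u$-lines. Choose $v$ constant along them, so $e_1(v)=0$; then $e_1=\partial_u$ and $e_2=G^{-1}\partial_v$ for some positive $G$, and the metric is $du^2+G^2dv^2$.
\item For this metric the standard formula gives $\nabla^\Sigma_{e_2}e_1=(G_u/G)e_2$. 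Comparing with (ii) gives $G_u=BG$, so $G=p(v)e^{\int B\,du}$.
\end{enumerate}

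For (iv), I will use the known formula $K_{\mathrm{int}}=-G_{uu}/G$ for the metric $du^2+G^2dv^2$. With $G_u=BG$ this becomes $K_{\mathrm{int}}=-(B_u+B^2)$. For the extrinsic curvature, $K_{\mathrm{ext}}=\kappa_1\kappa_2$. Since $e_1=\partial_u$, (i) gives $\kappa_1=\theta_u-f\cos\theta$. Moreover $e_2(\theta)=0$ shows that $\theta$ depends only on $u$, which justifies writing $d\theta/du$.

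The main obstacle is step 2 of (iii): the construction of the coordinates and the identification of $G_u/G$ with $B$. Everything else is direct substitution into \eqref{der-comp}.
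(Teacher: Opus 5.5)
Your proposal is correct and follows the paper's route: (i) and (ii) come from substituting the principal frame into \eqref{der-comp}, then coordinates are built from the bracket relation so that $B=G_u/G$, and the curvatures are read off. The differences are cosmetic. You construct the coordinates explicitly from the closed form $\omega^1$ and compute $K_{\mathrm{int}}$ from $-G_{uu}/G$. The paper simply asserts the coordinates from $[e_1,e_2]=-Be_2$, gets $B=\beta_u/\beta$ from the Lie bracket, and computes $R^\Sigma(e_2,e_1)e_1$ directly in the frame.
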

\begin{proof}
\begin{enumerate}
\item[(i)] Writing $X=e_1$ and $X=e_2$ into \eqref{con-prin0} yields the derivatives $e_i(\theta)$ in \eqref{e12}.
\item[(ii)] We compute the covariant derivatives $\nabla^\Sigma_{e_i}e_j$. By plugging $\t=\sin\theta e_1$ and $A(e_i)=\kappa_ie_i$ into \eqref{der-comp} we obtain
\begin{equation}\label{der-comp3}
\left\{
\begin{aligned}
f\big(X-\sin^2 \theta\langle X,e_1\rangle e_1\big)
&=X(\sin \theta)e_1+\sin \theta\nabla^\Sigma_Xe_1-\cos \theta A(X),\\[4pt]
\kappa_1\sin \theta\langle e_1,X\rangle+ X(\cos \theta)
&=-f \sin \theta\cos \theta \langle X,e_1\rangle.
\end{aligned}
\right.
\end{equation}
Substituting $X=e_1$ in the first equation of \eqref{der-comp3} and considering \eqref{e12}, we conclude that $\sin \theta\nabla^\Sigma_{e_1}e_1=0$. From the identity $0=e_1\langle e_1,e_2\rangle$, it follows that $\langle e_1,\nabla^\Sigma_{e_1}e_2\rangle=0$. Since $\nabla^\Sigma_{e_1}e_2$ is orthogonal to both $e_1$ and $e_2$, we derive $\nabla^\Sigma_{e_1}e_2=0$. In addition, writing $X=e_2$ in the first equation of \eqref{der-comp3}, we obtain $ \nabla^\Sigma_{e_2}e_1=Be_2$. The value of the covariant derivative $\nabla^\Sigma_{e_2}e_2$  follows from the identity $\langle \nabla^\Sigma_{e_2}e_1,e_2\rangle=- \langle \nabla^\Sigma_{e_2}e_2,e_1\rangle.$ Since $\nabla^\Sigma$ is the Levi-Civita connection,  the Lie bracket $[e_1,e_2]$ is concluded by the torsion-free property. Therefore, we have obtained the expressions in \eqref{levi}.
\item[(iii)] Using the items (i) and (ii), the curvature tensor is computed by 
\begin{eqnarray*}
R^\Sigma(e_2,e_1)e_1&=&\nabla^\Sigma_{e_2}\nabla^\Sigma_{e_1}e_1-\nabla^\Sigma_{e_1}\nabla^\Sigma_{e_2}e_1+\nabla^\Sigma_{[e_1,e_2]}e_1 \\
&=& -\big(e_1(B)+B^2\big)e_2,
\end{eqnarray*}
which yields $K_{\mathrm{int}}=-\big(e_1(B)+B^2\big)$. Again by using $[e_1,e_2]=-Be_2$ in \eqref{levi}, we may choose a local coordinate system $(u,v)$ on $\Sigma$ such that $e_1=\partial_u$ and $e_2=\frac{1}{\beta(u,v)}\partial_v$, where $\beta(u,v)$ is a smooth positive function. Since $[e_2,e_1]=B e_2$, it follows that $B=\frac{\beta_u}{\beta}$. Integrating, we obtain $\beta(u,v)=p(v)e^{\int B(u,v)du}$ for some positive function $p(v)$. 
\item[(iv)] $K_{\mathrm{int}}$ is already been computed in the previous item. The computation of $K_{\mathrm{ext}}$ follows from the first equality in \eqref{e12} together with the product of the principal curvatures.
\end{enumerate}
\end{proof}

In the next two subsections, we separately investigate the cases where $\Sigma$ is totally geodesic or not.

\subsection{Totally geodesic PA surfaces} 
Assume that $\Sigma$ is totally geodesic and $\theta \neq 0,\frac{\pi}{2}$. As emphasized in Remark \ref{rem31}, if $\v$ is not parallel then the prescribed angle $\theta$ is not necessarily constant. In this case, Proposition \ref{plevi} yields the following result.

\begin{proposition}\label{tg}
Let $\Sigma$ be a totally geodesic PA surface with $(\v,\theta)$, where $\v$ is an anti-torqued vector field with potential function $f$ and $\theta$ is non-constant. Then there is a local coordinate system $(u,v)$ on $\Sigma$ such that
\begin{equation}\label{tg1}
\theta(u)=\cos^{-1}\big( \sech F(u) \big), \quad K_{\mathrm{int}}=-\big(F''(u)\coth F(u)+F'(u)^2\big).
\end{equation}
where $F(u):=\int^uf(t)dt+c$, $c\in\r$.
\end{proposition}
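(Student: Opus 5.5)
Since $\Sigma$ is totally geodesic, every tangent direction is principal with $\kappa_1=\kappa_2=0$. I take $\{e_1,e_2\}$ with $\t=\sin\theta\, e_1$; this requires $\theta\neq 0$ on the open set where we work, which holds wherever $\theta$ is non-constant. Then Proposition \ref{plevi} applies. Its item (i) gives
\[
e_1(\theta)=f\cos\theta,\qquad e_2(\theta)=0 .
\]
Its item (iii) provides coordinates $(u,v)$ with $e_1=\partial_u$, and item (iv) gives
\[
B=\frac{f}{\sin\theta},\qquad K_{\mathrm{int}}=-(B_u+B^2).
\]
So $\theta=\theta(u)$ depends only on $u$ and satisfies the ODE $\theta'(u)=f\cos\theta$.

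The first step is to show that $f$ is also a function of $u$ alone, so that $F(u)=\int^u f\,dt+c$ makes sense. I would get this from $e_2(\theta)=0$ together with the ODE: apply $\partial_v$ to $\theta_u=f\cos\theta$. Since $\theta_v=0$, the left side vanishes, which yields $f_v\cos\theta=0$. As $\theta\neq\pi/2$ on the relevant open set, we get $f_v=0$. (If $\theta=\pi/2$ on an open set, $\theta$ is locally constant there, so we exclude that.) Hence $f=f(u)$.

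The second step is to integrate the separable equation $\theta'\sec\theta=f$, which gives $\ln|\sec\theta+\tan\theta|=F(u)$. Equivalently, setting $\sec\theta=\cosh F$ gives $\sech F=\cos\theta$, i.e. $\theta=\cos^{-1}(\sech F)$. To check this, differentiate $\cos\theta=\sech F$:
\[
-\sin\theta\,\theta'=-\sech F\tanh F\,F'.
\]
With $\sin\theta=\tanh|F|$, this gives $\theta'=f\cos\theta$ up to the sign of $F$. I would fix that sign by choosing the integration constant $c$ so that $F>0$, using $\theta\in[0,\pi/2]$.

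The third step is the curvature. We have $\sin\theta=\tanh F$, so $B=f\coth F=F'\coth F$. Then
\[
B_u=F''\coth F-\frac{F'^2}{\sinh^2 F},\qquad B^2=F'^2\coth^2 F,
\]
and $\coth^2F-\operatorname{csch}^2F=1$. Therefore $K_{\mathrm{int}}=-(F''\coth F+F'^2)$, matching \eqref{tg1}.

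The main obstacle is the first step: justifying that $f$ depends only on $u$ and handling the sign and branch of the integration, in particular ensuring $\cos\theta\neq 0$ and $F>0$ locally. The remaining computations are routine.
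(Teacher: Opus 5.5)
Your proposal is correct and follows the paper's route. You apply Proposition \ref{plevi} with $\kappa_1=\kappa_2=0$ to get $\theta_u=f\cos\theta$ and $B=f\csc\theta$, integrate to $\cos\theta=\sech F$, and substitute $B=F'\coth F$ into $K_{\mathrm{int}}=-(B_u+B^2)$.

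You also check two details the paper leaves implicit: that $f_v=0$, so $F$ is a function of $u$ alone, and that $F=\ln(\sec\theta+\tan\theta)>0$ where $0<\theta<\pi/2$. On the second point, the relation $\ln(\sec\theta+\tan\theta)=F$ already forces the sign of $F$, so the integration constant $c$ is determined rather than freely chosen.
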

\begin{proof}
Since $\Sigma$ is totally geodesic, its principal curvatures vanish and hence every tangent direction is principal. Then, by \eqref{e12} and \eqref{intcur0}, we have $\theta_u=f\cos \theta$. Integrating this equation yields the first identity in \eqref{tg1}. On the other hand, the function $B$ introduced in Proposition \ref{plevi} is now $B=f\csc\theta$. Due to $\csc\theta=\coth F$, it follows that $B=f\coth F$. Substituting this expression into \eqref{intcur}, we obtain the formula for $K_{\mathrm{int}}$, completing the proof.
\end{proof}

Notice that the potential function $f$ and the intrinsic curvature $K_{\mathrm{int}}$ can be also expressed in terms of  $\theta$ by
\begin{equation*}
f=\theta' \sec\theta, \quad K_{\mathrm{int}}=-\big((\log \tan \theta)''+(\log \tan\theta)'^2\big).
\end{equation*}

We now provide an example in the hyperbolic ambient space illustrating Proposition \ref{tg}.

\begin{example}\label{htg}
Consider the upper half-space model of $\h^3$
$$
\h^3=\{(x,y,z)\in \r^3:z>0\}
$$
endowed with the conformal metric $\langle,\rangle_h=\frac{1}{z^2}\langle,\rangle_e$, where $\langle,\rangle_e$ is the Euclidean metric. It is known from \cite{amc} that $\v=-z\partial_z$ is an anti-torqued vector field on $\h^3$ whose potential function satisfies $f=1$.

Let $\s^2_+$ denote  the hemi-sphere contained in $\h^3$ given by
$$
\s^2_+=\{(x,y,z)\in \r^3:x^2+y^2+z^2=r^2,z>0\}.
$$
The principal curvatures $\kappa_i^h$ of $\s^2_+$ in $\h^3$ are related to the Euclidean principal curvatures $\kappa_i^e$ by $\kappa_i^h=z\kappa_i^e+\langle \n_e,\partial_z\rangle$, where $\n_e$ denotes the Euclidean unit normal to $\s^2_+$ in $\r^3$ (see \cite{gss}). Since $\kappa_i^e=-\frac1r$ and $\langle \n_e,\partial_z\rangle=\frac{z}{r}$, we have $\kappa_i^h=0$. Hence, $\s^2_+$ is totally geodesic in $\h^3$. 

The angle $\theta$ between the unit normal $\n_h$ of $\s^2_+$ and the anti-torqued vector field $\v$ along $\s^2_+$ is given by $\cos\theta=\frac{z}{r}$. In addition, since $\h^3$ has constant sectional curvature $-1$, the intrinsic curvature of $\s^2_+$ is $K_{\mathrm{int}}=-1$.

Now choose a local coordinate system $(u,v)$ on $\s^2_+$ given by
$$
x=r\cos g(u)\cos v, \quad y=r\cos g(u)\sin v, \quad z=r\sin g(u),
$$
where $g(u)=\big(\sin^{-1}(\sech u)\big)$. Due to $f=1$ and, up to an integration constant, the function $F$ given in \eqref{tg1} is $F(u)=u$. Consequently, the angle function satisfies $\cos\theta(u)=\sech u$, verifying the first expression in \eqref{tg1}. Moreover, since $F''=0$, $K_{\mathrm{int}}=-1$ which also verifies the intrinsic curvature formula in \eqref{tg1}.
\end{example}

In Example \ref{htg}, since $\h^3$ has consant sectional curvature, the intrinsic curvature $K_{\mathrm{int}}$ of the totally geodesic surface $\s^2_+$ becomes directly $-1$. However, this is not the case in general. Notice that the second equality in \eqref{tg1} is indeed an ODE and the existence of solutions is guaranteed by the standard theory of ODEs.

In the following result, we provide a necessary and sufficient condition for a totally geodesic PA surface to have constant intrinsic curvature, expressed in terms of the potential function.

\begin{theorem}\label{classtotgeo}
Let $\Sigma$ be a totally geodesic PA surface with $(\v,\theta)$, where $\v$ is an anti-torqued vector field with nonzero potential function $f$ and $\theta$ is non-constant. The intrinsic curvature $K_{\mathrm{int}}$ is some constant $K_0$ if and only if there is a coordinate system $(u,v)$ on $\Sigma$ such that 
\begin{equation}\label{ODEtg0}
F(u)=
\begin{cases}
\displaystyle
\sinh^{-1}\!\left(
\sqrt{\frac{c_1-K_0}{-K_0}}\,
\sinh\big(\pm\sqrt{-K_0}(u+c_2)\big)
\right) , & K_0<0, \, c_1\geq 0,\\[14pt]
\displaystyle
\sinh^{-1}\big(\pm\sqrt{c_1}\,u+c_2\big), & K_0=0, \, c_1>0,\\[10pt]
\displaystyle
\sinh^{-1}\!\left(
\sqrt{\frac{c_1-K_0}{K_0}}\,
\sin\big(\sqrt{K_0}(u+c_2)\big)
\right), & 0<K_0<c_1, 
\end{cases}
\end{equation}
where $c_1,c_2\in \r$  and the potential function is given by $f=F'$.
\end{theorem}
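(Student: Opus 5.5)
The plan is to reduce the nonlinear curvature equation of Proposition \ref{tg} to a linear ODE by a change of unknown. Since $f\neq 0$ and $\theta$ is non-constant, Proposition \ref{tg} provides coordinates $(u,v)$ in which $\cos\theta=\sech F(u)$ with $F'=f$. The proof of Proposition \ref{tg} also gives $B=f\coth F=F'\coth F$. The key observation is that
$$
B=\frac{(\sinh F)'}{\sinh F}=\big(\log \sinh F\big)',
$$
which is legitimate because $\theta\in(0,\pi/2)$ forces $\sinh F\neq 0$ (we may take $F>0$ after a choice of sign). With this, the Riccati-type expression $B'+B^2$ becomes $w''/w$ for $w:=\sinh F$. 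Hence, by \eqref{intcur}, $K_{\mathrm{int}}=-w''/w$, and the condition $K_{\mathrm{int}}=K_0$ is equivalent to the linear equation
$$
w''+K_0\,w=0,\qquad w=\sinh F(u).
$$

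Next I will fix the integration constants through a first integral rather than through the general solution. Multiplying by $w'$ gives $w'^2+K_0w^2=\mathrm{const}$, and I will write this constant as $c_1-K_0$. Using $w'=F'\cosh F$, this first integral reads $(f^2+K_0)\cosh^2F=c_1$, which ties $c_1$ to the geometry of $f$ and explains the sign restrictions in \eqref{ODEtg0}. The three cases are then the following.
\begin{enumerate}
\item[(a)] If $K_0<0$, put $\lambda=\sqrt{-K_0}$. The solution with the admissible sign of $c_1-K_0$ is $w=\sqrt{(c_1-K_0)/(-K_0)}\,\sinh(\pm\lambda(u+c_2))$.
\item[(b)] If $K_0=0$, then $w'^2=c_1$, so $w=\pm\sqrt{c_1}\,u+c_2$, and $c_1>0$ is needed because $F$, hence $w$, must be non-constant.
\item[(c)] If $K_0>0$, positivity of $w'^2$ somewhere forces $c_1>K_0$, and then $w=\sqrt{(c_1-K_0)/K_0}\,\sin(\sqrt{K_0}(u+c_2))$.
\end{enumerate}
In each case I apply $\sinh^{-1}$ to recover $F$, and then $f=F'$ follows.

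For the converse, I will start from $F$ as in \eqref{ODEtg0}, set $f=F'$ and $\theta=\cos^{-1}(\sech F)$, and verify directly that $w=\sinh F$ satisfies $w''=-K_0w$. Then \eqref{tg1}, rewritten as $K_{\mathrm{int}}=-w''/w$, gives $K_{\mathrm{int}}=K_0$.

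The main obstacle I expect is the bookkeeping in case (a). For $K_0<0$, the general solution of the linear ODE also contains $\cosh$-type branches (when $c_1-K_0<0$) and pure exponentials (when $c_1=K_0$). I must argue, via the relation $(f^2+K_0)\cosh^2F=c_1$ together with the normalization $c_1\ge 0$, that the stated $\sinh$ form is the right one up to translation and reflection of $u$. Failing that, I would have to record the omitted branches as degenerate. Concretely, I will first check which branches are compatible with $F>0$ on an interval, and then absorb the remaining freedom into $c_2$ and the sign $\pm$.
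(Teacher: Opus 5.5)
Your linearization is correct and is a cleaner route than the paper's. Since $B=F'\coth F=(\log\sinh F)'$, you get $K_{\mathrm{int}}=-w''/w$ with $w=\sinh F$, so $K_{\mathrm{int}}=K_0$ becomes the linear equation $w''+K_0w=0$. The paper reaches the same first integral \eqref{ODEtg1} by reduction of order ($P=F'$, $Q=P^2$), which forces it to split off $F''=0$ and to divide by $Q+K_0$. Your treatment of $K_0=0$, $K_0>0$ and of the converse is fine.

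The step that fails is exactly the one you flagged in case (a). The $\cosh$ and exponential branches cannot be removed by a ``normalization $c_1\ge 0$''. The quantity $c_1=(f^2+K_0)\cosh^2F$ is not a normalization but an invariant of the data: the requirement $f=F'$ fixes $u$ up to $u\mapsto \pm u+\mathrm{const}$, and neither this nor $F\mapsto -F$ changes $c_1$. With $\lambda=\sqrt{-K_0}$ and $w=a\cosh(\lambda u)+b\sinh(\lambda u)$ one computes $c_1=\lambda^2(b^2-a^2-1)$. So the $K_0<0$ line of \eqref{ODEtg0} is precisely the subfamily $b^2-a^2\ge 1$. The remaining solutions are the $\sinh$ branch with amplitude in $(0,1)$ (i.e.\ $K_0<c_1<0$), the exponential branch $c_1=K_0$, and the $\cosh$ branch $c_1<K_0$. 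None of these is degenerate, since $F\neq 0$ and $f\neq 0$ on open sets, and they are realized geometrically.

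For instance, let $M=\h^2\times\r$ with $t$ the coordinate on $\r$, and let $\Sigma=\h^2\times\{0\}$, which is totally geodesic, with $g^\Sigma=du^2+\cosh^2u\,dv^2$, so $K_{\mathrm{int}}=-1$. Fix $a>0$, define $F>0$ by $\sinh F=a\cosh u$, and put $\v=\tanh F\,\partial_u+\sech F\,\partial_t$. Use $\nabla_{\partial_u}\partial_u=0$, $\nabla_{e_2}\partial_u=\tanh u\,e_2$ with $e_2=(\cosh u)^{-1}\partial_v$, and $\tanh F\tanh u=a\sinh u/\cosh F=F'$. Then both equations of \eqref{der-comp} hold with $A=0$ and $f=F'$. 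So on $\{u>0\}$ this is a totally geodesic PA surface with a unit anti-torqued $\v$, $f>0$, non-constant $\theta$ and $K_{\mathrm{int}}=-1$. Yet $c_1=-(1+a^2)<K_0$, and $F=\sinh^{-1}(a\cosh u)$ is not of the form \eqref{ODEtg0}.

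Hence the ``only if'' direction is false as stated for $K_0<0$, and no argument can complete your case (a). What your linear ODE actually proves is the corrected classification $\sinh F=a\cosh(\sqrt{-K_0}\,u)+b\sinh(\sqrt{-K_0}\,u)$ with $(a,b)\neq(0,0)$. The paper's proof has the same defect. Integrating $\frac{1}{Q+K_0}\frac{dQ}{dF}=-2\tanh F$ gives $Q+K_0=c_1\sech^2F$ with $c_1$ of either sign. Its assertion ``$c_1>0$'' is precisely the unjustified restriction you were hoping to import as a normalization.
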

\begin{proof}
Assume that $K_{\mathrm{int}}=K_0$. Then, by \eqref{tg1}, we obtain
\begin{equation}\label{ODEtg}
F''\coth F+F'^2+K_0=0.
\end{equation}
Since $f\neq 0$, the function $F$ is non-constant. If $F''=0$, then \eqref{ODEtg} reduces to $F'^2+K_0=0$, which implies that $K_0<0$ and $F'(u)=\pm\sqrt{-K_0}$. This corresponds to a particular solution of the form $F(u)=\pm\sqrt{-K_0}u+c$ in \eqref{ODEtg0}, where $c_1=0$ and $c_2=c$.

We now assume that $F''\neq 0$ and proceed to solve \eqref{ODEtg}. Since $F''\neq 0$, we may treat the function $F'$ as a variable. Therefore, we set $F'=P$. By the chain rule, differentiating this equality yields
$$F''=\frac{dF}{du}\frac{dP}{dF}=P\frac{dP}{dF}.$$ 
Substituting into \eqref{ODEtg}, we conclude
$$
P\frac{dP}{dF}\coth F+P^2+K_0=0.
$$
We change the variable again and set $Q=P^2$. Differentiating, we obtain $\frac{dQ}{dF}=2P\frac{dP}{dF}$, and thus
$$
\frac{1}{Q+K_0}\frac{dQ}{dF}=-2\tanh F.
$$
Integrating,

\begin{equation}\label{ODEtg001}
P^2=c_1\sech^2F-K_0,
\end{equation}
where $c_1>0$ is some constant.

Finally, we use another change of variable. Introduce $S(u)=\sinh F(u)$. Then $S'=F'\cosh F$. Multiplying \eqref{ODEtg001} by $\cosh^2 F=1+\sinh^2 F$, we obtain
\begin{equation}\label{ODEtg1}
S'^2=c_1-K_0(1+S^2)=c_1-K_0-K_0S^2.
\end{equation}
The solutions depend on the sign of $K_0$.

\textbf{Case $K_0<0$.} Solving \eqref{ODEtg1}, we obtain
$$
S(u)=\sqrt{\frac{K_0-c_1}{K_0}}\sinh \big (\pm\sqrt{-K_0}u+c_2 \big ), \quad c_2\in \r.
$$
Since $S(u)=\sinh F(u)$, this yields the expression for $F(u)$ in the case $K_0<0$.

\textbf{Case $K_0=0$.} Equation \eqref{ODEtg1} reduces to $S'^2=c_1$ and so $S(u)=\pm \sqrt{c_1}u+c_2$, $c_2\in \r.$ Thus $F(u)=\sinh^{-1}(\sqrt{c_1}u+c_2)$. 

\textbf{Case $K_0>0$.} In this case, \eqref{ODEtg1} implies $c_1>K_0$ and we derive
$$
S(u)=\sqrt{\frac{c_1-K_0}{K_0}}\sin \big (\sqrt{K_0}u+c_2 \big ), \quad c_2\in \r.
$$
Again, using $S(u)=\sinh F(u)$, we obtain the required expression for $F(u)$ in the case $K_0>0$.
\end{proof}

Once Theorem \ref{classtotgeo} has been proved, we may conclude from \eqref{ODEtg0} that the potential function $f$ is given by
\begin{equation}\label{potf0}
f(u)=F'(u)=
\begin{cases}
\displaystyle
\frac{\pm\sqrt{c_1-K_0}\,
\cosh\big(\pm\sqrt{-K_0}(u+c_2)\big)}
{\left(1+\frac{c_1-K_0}{-K_0}
\sinh^2\big(\pm\sqrt{-K_0}(u+c_2)\big)\right)^{1/2}} ,
& K_0<0, \, c_1\geq 0,\\[16pt]
\displaystyle
\frac{\pm\sqrt{c_1}}
{\left(1+\big(\pm\sqrt{c_1}\,u+c_2\big)^2\right)^{1/2}},
& K_0=0, \, c_1>0,\\[16pt]
\displaystyle
\frac{\sqrt{c_1-K_0}\,
\cos\big(\sqrt{K_0}(u+c_2)\big)}
{\left(1+\frac{c_1-K_0}{K_0}
\sin^2\big(\sqrt{K_0}(u+c_2)\big)\right)^{1/2}} ,
& 0<K_0<c_1.
\end{cases}
\end{equation}

Returning to Example \ref{htg}, the case $K_{\mathrm{int}}=-1$ corresponds to the choice $c_1=0$ in the family of solutions with $K_{\mathrm{int}}<0$ in \eqref{potf0}. 

Next, consider the horizontal plane $\Sigma$ in $\r^3$ given by $\Sigma:\{(x,y,z)\in \r^3:z=1\},$ which is a totally geodesic surface. A polar coordinate system on $\Sigma$ is given by $(x,y)=u(\cos v,\sin v)$. Let $\Phi$ be the position vector. Then, $\v=\frac{\Phi}{|\Phi|}$ is an anti-torqued vector field along $\Sigma$ with potential function $f=\frac{1}{|\Phi|}=\frac{1}{\sqrt{1+u^2}}$, which corresponds to the case $K_{\mathrm{int}}=0$ in \eqref{potf0}, where $c_1=1$ and $c_2=0$.

\subsection{Non-totally geodesic PA surfaces}
Even if $\Sigma$ is not totally geodesic, it is still possible to characterize such surfaces under the assumption of constant intrinsic curvature. Let $\Sigma$ be a PA surface satisfying the conditions of Proposition \ref{plevi}. If the intrinsic curvature $K_{\mathrm{int}}$ is a constant $K_0$, then from \eqref{intcur} we obtain
$$
B_u+B^2+K_0=0.
$$
According to the sign of $K_0$, its solutions are given by
\begin{equation}\label{solvb1}
B(u,v)=\frac{f+\kappa_2\cos\theta}{\sin\theta}=
\begin{cases}
\sqrt{K_0}\,\tan\big(q(v)-\sqrt{K_0}\,u\big), & K_0>0,\\[6pt]
\dfrac{1}{u+q(v)}, & K_0=0,\\[10pt]
\sqrt{-K_0}\,\tanh\big(q(v)+\sqrt{-K_0}\,u\big) \quad \textnormal{or} \quad
\pm\sqrt{-K_0},  & K_0<0,
\end{cases}
\end{equation}
where $q(v)$ is an arbitrary smooth function.

Particularly, it is worth investigating the case $\theta=\frac{\pi}{2}$, because in this case the potential function $f$ coincides with the function $B$. This leads to the following result.

\begin{theorem}\label{trightangle}
Let $\Sigma$ be a PA surface with $(\v,\theta)$, where $\v$ is an anti-torqued vector field with nonzero potential function $f$.  If $\theta=\frac{\pi}{2}$, then $\Sigma$ is an extrinsically flat ruled surface. Moreover, the intrinsic curvature $K_{\mathrm{int}}$ is a constant  $K_0$ if and only if there is a coordinate system $(u,v)$ on $\Sigma$ such that
\begin{equation}\label{solvf1}
f(u,v)=
\begin{cases}
\sqrt{K_0}\,\tan\big(q(v)-\sqrt{K_0}\,u\big), & K_0>0,\\[6pt]
\dfrac{1}{u+q(v)}, & K_0=0,\\[10pt]
\sqrt{-K_0}\,\tanh\big(q(v)-\sqrt{-K_0}\,u\big) \quad \textnormal{or} \quad
\pm\sqrt{-K_0},  & K_0<0,
\end{cases}
\end{equation}
where $q(v)$ is an arbitrary smooth function.
\end{theorem}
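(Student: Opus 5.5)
The plan has two parts: first show $\Sigma$ is ruled and extrinsically flat, then specialize Proposition \ref{plevi} to $\theta=\frac{\pi}{2}$ and solve the resulting Riccati equation.

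\textbf{Ruled and extrinsically flat.} Item (i) of Proposition \ref{zeroangle} applies directly. With $\theta=\frac{\pi}{2}$ we have $\t=e_1$, $A(e_1)=0$ and $\nabla_{e_1}e_1=0$. Hence the integral curves of $e_1$ are ambient geodesics, and they foliate $\Sigma$, so $\Sigma$ is ruled. Since $A(e_1)=0$, the vector $e_1$ is a principal direction with $\kappa_1=0$, and therefore $K_{\mathrm{ext}}=\kappa_1\kappa_2=0$. This agrees with \eqref{intcur}: there $\frac{d\theta}{du}=0$ and $\cos\theta=0$, so $K_{\mathrm{ext}}=0$.

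\textbf{Reduction to a Riccati equation.} Because $\t=e_1$ is principal, Proposition \ref{plevi} applies with the frame $\{e_1,e_2\}$ of principal directions. Substituting $\theta=\frac{\pi}{2}$ into $B=\frac{f+\kappa_2\cos\theta}{\sin\theta}$ gives $B=f$. So in the coordinates $(u,v)$ of \eqref{intcur0} we get
\[
K_{\mathrm{int}}=-\big(f_u+f^2\big).
\]
Therefore $K_{\mathrm{int}}=K_0$ is equivalent to the Riccati equation $f_u+f^2+K_0=0$, in which $v$ enters only as a parameter.

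\textbf{Solving by cases.} We separate variables as $\frac{df}{K_0+f^2}=-du$ and split according to the sign of $K_0$.
\begin{itemize}
\item $K_0>0$: we get $\frac{1}{\sqrt{K_0}}\arctan\frac{f}{\sqrt{K_0}}=-u+\text{const}(v)$, hence $f=\sqrt{K_0}\tan\big(q(v)-\sqrt{K_0}u\big)$.
\item $K_0=0$: either $f\equiv 0$, which is excluded by hypothesis, or $1/f=u+q(v)$.
\item $K_0<0$: the constant solutions are $f=\pm\sqrt{-K_0}$. Otherwise, for $|f|<\sqrt{-K_0}$ we get $\frac{1}{\sqrt{-K_0}}\tanh^{-1}\frac{f}{\sqrt{-K_0}}=-u+\text{const}$, giving $f=\sqrt{-K_0}\tanh\big(q(v)-\sqrt{-K_0}u\big)$.
\end{itemize}

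\textbf{Main obstacle.} The delicate point is the $K_0<0$ case. For $|f|>\sqrt{-K_0}$ the same integration produces a $\coth$ branch, which the statement does not list. I would first check whether this branch can be absorbed by allowing complex shifts of $q$ or a reparametrization $u\mapsto -u$ (via the orientation of $e_1$). Failing that, I would note it as an extra branch. I would also confirm that the sign conventions match \eqref{solvb1}; its $\tanh$ argument differs in sign, which is harmless since $q$ is arbitrary only up to sign conventions in $u$.

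\textbf{Converse.} Each listed $f$ satisfies $f_u+f^2=-K_0$. Then \eqref{intcur} with $B=f$ gives $K_{\mathrm{int}}=K_0$, which completes the equivalence.
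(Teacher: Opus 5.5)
Your route is the paper's: Proposition \ref{zeroangle}(i) gives the ruling by ambient geodesics and $\kappa_1=0$, so $K_{\mathrm{ext}}=0$. Proposition \ref{plevi} then applies with $B=f$, which turns $K_{\mathrm{int}}=K_0$ into $f_u+f^2+K_0=0$ in the coordinates \eqref{intcur0}; the paper simply reads off \eqref{solvb1}. That part, and your cases $K_0>0$ and $K_0=0$, are fine.

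The gap is the case $K_0<0$. It cannot be fixed by conventions, because the printed formula is wrong there. Put $a=\sqrt{-K_0}$. The equation is $df/(a^2-f^2)=du$, so for $|f|<a$ one gets $\frac1a\tanh^{-1}\frac fa=u+\mathrm{const}$, not $-u+\mathrm{const}$ as you wrote. Hence $f=a\tanh\big(q(v)+au\big)$, exactly as in \eqref{solvb1}. Your sign slip is the only reason you recovered \eqref{solvf1}. For $f=a\tanh(q-au)$ one has $f_u=f^2-a^2$, hence $f_u+f^2+K_0=2(f^2-a^2)\neq0$ and $K_{\mathrm{int}}=a^2-2f^2$ is not constant. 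So your converse claim that each listed $f$ satisfies $f_u+f^2=-K_0$ is false for this entry.

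Nor is the discrepancy with \eqref{solvb1} harmless. The function $q(v)$ only translates the argument and cannot turn a function decreasing in $u$ into an increasing one. The variable $u$ cannot be reversed either, since $\partial_u=e_1=\t/|\t|$ is fixed by \eqref{intcur0}. Without this normalization the ``if'' direction fails, and reversing $u$ would also spoil the correct $K_0>0$ and $K_0=0$ formulas.

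The $\coth$ branch you flagged is genuine. The equilibria $\pm a$ separate the solutions with $|f|<a$, $|f|=a$ and $|f|>a$. So $f=a\coth\big(q(v)+au\big)$ is neither a $\tanh$ solution nor a constant. The shift $q\mapsto q+i\pi/2$ is not admissible for a real $q$, and no change of coordinates alters the range of $f$.

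This branch also occurs. In $\h^3$ the unit radial field $\v=\operatorname{grad} r$ about a point $o$ satisfies $\nabla_X\v=\coth r\,(X-\langle X,\v\rangle\v)$. Hence every geodesic cone with vertex $o$ (e.g. a totally geodesic plane through $o$) is a PA surface with $(\v,\frac{\pi}{2})$, $K_{\mathrm{int}}=-1$ and $f=\coth u$, where $u$ is the distance to $o$. This is the hyperbolic analogue of Example \ref{ex41}(i). Since $f>1$, no $K_0=-1$ entry of \eqref{solvf1} matches it in any coordinates, so the ``only if'' direction fails as printed.

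What your argument actually establishes is the corrected statement: for $K_0<0$, $f$ equals $\sqrt{-K_0}\tanh\big(q(v)+\sqrt{-K_0}\,u\big)$, $\sqrt{-K_0}\coth\big(q(v)+\sqrt{-K_0}\,u\big)$ or $\pm\sqrt{-K_0}$. The same holds for the paper's argument, whose \eqref{solvb1} has the right sign but also omits the $\coth$ branch. Prove that version rather than trying to absorb the discrepancies.
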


\begin{proof}
Due to $\theta=\frac{\pi}{2}$, it follows from \eqref{decomp} that $\v$ is tangent to $\Sigma$ and defines a principal direction with vanishing principal curvature. This yields that $K_{\mathrm{ext}}=0$ and that the hypothesis of Proposition \ref{plevi} is satisfied.  Moreover, by Proposition \ref{plevi}, we have $B=f$. Therefore, the result follows from \eqref{solvb1}.
\end{proof}

Assume that $\Sigma$ is a PA surface with $(\v,\frac{\pi}{2})$. From Theorem \ref{trightangle}, it follows that $K_{\mathrm{ext}}=0$. By the Gauss equation, if the ambient space is a space form, then the sectional curvature $K_{\mathrm{sec}}$ is constant and consequently the intrinsic curvature $K_{\mathrm{int}}$ is also constant. For instance, we have the following:

\begin{example}\label{ex41}
\begin{enumerate}
\item[(i)] Let $\Phi$ be the position vector field on $\mathbb{R}^n$. Then $\v=\frac{\Phi}{|\Phi|}$ defines an anti-torqued vector field on $\r^n\setminus \{0\}$. Consider a cone $\Sigma \subset \r^3$ with vertex at the origin. The vector field $\v$ is tangent to $\Sigma$. In our setting, $\Sigma$ is a PA surface with $(\v,\frac{\pi}{2})$. A local parametrization of $\Sigma$ is given by $X(u,v)=u\beta(v)$, where $\beta\subset \s^2$ is an arclength parametrized curve. Since the potential function of $\v$ is $f=\frac{1}{|\Phi|}$, its restriction to $\Sigma$ satisfies $(f\circ X)(u,v)=\frac1u$. This is consistent with \eqref{solvf1} in the case $K_0=0$.
\item[(ii)] Similar to Example \ref{htg}, we consider the upper half-space model of $\mathbb{H}^3$. Let $\v=-z\partial_z$ be an anti-torqued vector field on $\mathbb{H}^3$ with potential function equal to $1$. Then a cylinder $\Sigma$, whose rulings are parallel to $\partial_z$, is a PA surface with $(\v,\frac{\pi}{2})$. This corresponds to the case $K_0=-1$ in \eqref{solvf1}.
\end{enumerate}
\end{example}

As a final observation, we consider the case where the vector field $\v$ is parallel along $\Sigma$. If $\Sigma$ is totally umbilical, then every tangent direction to $\Sigma$ is principal and the principal curvatures coincide, say $\kappa\neq 0$. Hence, the assumption of Proposition \ref{plevi} is satisfied. It follows from \eqref{e12} that $\kappa=\theta'$ and $B=\theta'\cot\theta.$ Substituting into \eqref{intcur}, we get
$$
K_{\mathrm{int}}=\theta''\cot\theta+\theta'^2, \quad K_{\mathrm{ext}}=\theta'^2,
$$
and therefore $K_{\mathrm{sec}}=\theta''\cot\theta.$ In particular, if $K_{\mathrm{ext}}$ is constant, then $\theta'$ is constant and hence $\theta''=0$. Consequently,
$$
K_{\mathrm{sec}}=0 \quad \text{and} \quad K_{\mathrm{int}}=K_{\mathrm{ext}}.
$$

A basic example illustrating this situation is the standard sphere in $\r^3$ of radius $r$, whose principal curvatures are constant and equal to $1/r$.

\section*{Acknowledgments}
This study was supported by Scientific and Technological Research Council of Turkey (TUBITAK) under the Grant Number (123F451). The authors thank to TUBITAK for their supports.

\section*{Declarations}

\begin{itemize}
\item Conflict of interest: The authors declare no conflict of interest.

\item Ethics approval: Not applicable.

\item Availability of data and materials: Not applicable.

\item Code availability: Not applicable.

\item Authors’ contributions: Muhittin Evren Aydın formulated the problem, performed the main analysis and designed the structure of the manuscript. Esra Dilmen and Büşra Karakaya contributed to solving the problem and to writing the manuscript. All authors read and approved the final version of the paper.
\end{itemize}


 \end{document}